\newtheorem{thm}{Theorem}
\newtheorem{defn}[thm]{Definition}
\newtheorem{lem}[thm]{Lemma}
\newcommand{\E}{\mathcal{E}}
\newcommand{\G}{\mathcal{G}}
\newcommand{\Ta}{\mathcal{T}}
\newcommand{\bP}{\mathbb{P}}
\newcommand{\bE}{\mathbb{E}}
\newcommand{\bF}{\mathbb{F}}
\newcommand{\bN}{\mathbb{N}}
\title{Distinguishing Chromatic Number of Random Cayley graphs}
\author{Niranjan Balachandran\footnote{Supported by grant 12IRCCSG016, IRCC, IIT Bombay}, and Sajith Padinhatteeri \footnote{Supported by grant 09/087(0674)/2011-EMR-I, Council of Scientific \& Industrial Research, India},\\ Department of Mathematics\\
Indian Institute of Technology Bombay\\ Mumbai, India. }
\date{}
\begin{document}
\maketitle

\begin{abstract}
The \textit{Distinguishing Chromatic Number}  of a graph $G$, denoted $\chi_D(G)$, was first defined in \cite{collins} as the minimum number of colors needed to properly color $G$ such that no non-trivial automorphism $\phi$ of the graph $G$ fixes each color class of $G$. In this paper, we consider random Cayley graphs $\Gamma(A,S)$ defined over certain abelian groups $A$ and show that with probability at least $1-n^{-\Omega(\log n)}$ we have, $\chi_D(\Gamma)\le\chi(\Gamma) + 1$. 
\end{abstract}

\textbf{Keywords:}
Distinguishing Chromatic Number, Random Cayley graphs.\\

2010 AMS Classification Code: 05C15, 05C25, 05C80. 

\section{Introduction}

Let $G$ be a graph and let $Aut(G)$ denote its full automorphism group.  Albertson and Collins introduced the notion of the \textit{Distinguishing number of a graph} in \cite{AK}. 
\begin{defn} 
A labeling of vertices of a graph $G, h : V(G) \rightarrow  \{1, \hdots, r \}$
is said to be {\bf distinguishing} ( or $r$-distinguishing)  provided no nontrivial automorphism of the graph preserves all of the vertex labels. The distinguishing number of a graph $G,$ denoted by $D(G),$ is the minimum $r$ such that $G$ has an $r$-distinguishing labeling.\end{defn}

Collins and Trenk introduced the notion of the Distinguishing Chromatic Number in \cite{collins} as follows.
\begin{defn}
A labeling of vertices of a graph $G, h : V(G) \rightarrow  \{1, \hdots, r \}$ is said to be {\bf proper distinguishing} ( or proper $r$-distinguishing) 
provided the labeling is proper and distinguishing. The distinguishing chromatic number of a graph $G, \chi_{D}(G),$ is the minimum $r$ such that $G$ has a
proper $r$-distinguishing labeling. 
\end{defn}

In other words, the Distinguishing Chromatic Number of a graph $G$  is the least integer $r$ such that the vertex set can be partitioned into sets $V_1,V_2,\ldots, V_r$ such that each $V_i$ is independent in $G$, and for every $1\neq\pi\in Aut(G)$ there exists some color class $V_i$ such that $\pi(V_i)\neq V_i$. Since this notion is distinct from the notion of the chromatic number only when the graph admits non-trivial automorphisms, it is a matter of specific interest to determine the distinguishing chromatic number of graphs  with a large automorphism group.


 
 One class of graphs that decidedly admit non-trivial automorphism groups are Cayley graphs of groups. To recall the definition, let $A$ be a finite group with cardinality $n$ and let $S\subset A$ with $1 \notin S$ be an inverse closed subset of $A$. In other words, $S = S^{-1}$ where $S^{-1}:=\{g^{-1} : g \in S \}.$ The Cayley graph of $A$ with respect to $S,$ denoted by $\Gamma(A,S)$ is the following graph: $V(\Gamma(A,S)) = A$ and $E(\Gamma(A,S)) = \{(g, gh) : g \in A, h \in S \}.$   It is straightforward to see that the group $A$ acts regularly on $\Gamma(A,S)$.  If $A$ is abelian, the map $i(g) = -g$ is also an automorphism of $A$ which is distinct from any of the automorphisms induced by the member of $A$ unless $A\simeq\bF_2^r$ for $r\in \bN$. Hence, for $A$ abelian, it is easy to see that $A \rtimes \langle i\rangle\subset Aut(\Gamma(A,S)).$ In general, the full automorphism group of $\Gamma(A,S)$ can be larger, and determining it clearly depends on the  set $S$. 

In this paper we restrict our attention to random cayley graphs over classes of abelian groups, with the group operation expressed additively. The model for the random graphs on Cayley groups that we shall consider is described as follows. Let $A$ be a finite group with $|A|=n$, and let $0<p=p(n)<1$. Each element $g \in A$ of order $2$ is chosen with probability $p$ and for any other $x \in A,$ the pair $(x, -x)$ is chosen with probability $p$ and all these random choices are made independently to form the set $S$. The random Cayley graph is the graph $\Gamma_p:=\Gamma(A,S)$. 

Our main result in this paper states that $$\chi_D(\Gamma)\le\chi(\Gamma) + 1 \textit{\ with\  high\  probability\  (whp)}$$ over two classes of abelian groups which we shall describe below. Usually, the phrase `$E_n$ occurs with high probability' denotes that $\bP(E_n)\to 1$ as $n\to\infty$ for some relevant parameter $n$. In this paper, we shall also require rates of convergence, so our usage of the phrase shall mean that  $\bP(E_n) \ge 1-(n^{-c})$ for any constant $c$ for sufficiently large $n$. In our statements, $n=|A|$, the size of the underlying group. 

As mentioned earlier, if the group $A$ is abelian, then  $A \rtimes \langle i\rangle\subset Aut(\Gamma(A,S)).$ If equality holds in the above, then we say that $\Gamma(A,S)$ has automorphism group \textit{as small as possible}. A conjecture of  Babai and Godsil (see \cite{BG}) says that  if $A$ is an abelian group of order $n$, the proportion of inverse closed subsets $S$ for which for the corresponding Cayley graph $\Gamma(A,S)$  has automorphism group as small as possible is $1-o(1)$ as $n$ goes to infinity, and verified it when $n\equiv 3\pmod 4$. In a recent paper by Dobson, Spiga and Veret \cite{dobson}, the authors have proven this conjecture for all $n$. In other words, if an inverse closed set is picked uniformly at random then asymptotically almost surely, the corresponding random Cayley graph has automorphism group as small as possible.

In this paper we restrict our focus to the following families of random  Cayley graphs:
\begin{enumerate}
\item The random Cayley graph $\Gamma_p(A,S),$ with   $(|A|,6) = 1$. 
\item The random Cayley graph $\Gamma_p(A,S),$ where $A \cong \mathbb{Z}^r_2 \times N,$ and $N$ is an odd order group which is not cyclic. 
\end{enumerate}
We shall  call these as abelian groups of Type I and Type II respectively. Type I groups also appear in \cite{bgreen} where the chromatic number of a random Cayley graph $\Gamma_{1/2}(A,S)$ is determined asymptotically\footnote{In that paper, $x,y$ are adjacent in $\Gamma$ if and only if $x+y\in S$, and $S$ is picked uniformly at random from $A$.} As for Type II groups, the specific restrictions on $A$ may be relaxed, but it gets a bit messier to state the corresponding results, so we restrict our attention to these families of random Cayley graphs. 
\section{Preliminaries}
 Firstly, we show that the results of \cite{dobson} may be extended to the model of random graphs we are interested in, using very similar ideas, for a wider range of $p(n)$. We make no attempt to obtain the best possible constants that would make the following results work. We shall implicitly assume that $n$ is sufficiently large whenever the need arises.
 
  We write $f(n)\ll g(n)$ if $\displaystyle\lim_{n\to\infty}\frac{f(n)}{g(n)} = 0$. By $\log$ we shall mean $\log_2$ in the rest of this section. The number of elements of the group $A$ whose order is at most two, is denoted by $m$. 
\begin{lem}\label{lem00}
If $\frac{3}{2}\leq c_1,c_2\le n$ satisfy $c_1 c_2 \geq\frac{n}{24}$, and $p \in [\frac{25 (\log n)^2}{n}, 1 - \frac{ 25 (\log n)^2}{n}],$ then
$$n^{\log n} (p^{c_1} + (1-p)^{c_1})^{c_2}\leq n^{-\Omega(\log n)}.$$ 
\end{lem}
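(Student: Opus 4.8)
The plan is to take logarithms and show that $\ln\!\big(n^{\log n}(p^{c_1}+(1-p)^{c_1})^{c_2}\big)=(\log n)^2\ln 2+c_2\ln f\le-\Omega((\log n)^2)$, where I abbreviate $f:=p^{c_1}+(1-p)^{c_1}$ (recall $n^{\log n}=2^{(\log n)^2}$ since $\log=\log_2$). Because $f$ is invariant under $p\mapsto 1-p$ and the hypothesis $p\in[\frac{25(\log n)^2}{n},\,1-\frac{25(\log n)^2}{n}]$ is symmetric in $p$ and $1-p$, I may assume $p\le\frac12$. The single resource I feed into every estimate is the inequality $c_1c_2\,p\ge\frac{n}{24}\cdot\frac{25(\log n)^2}{n}=\frac{25}{24}(\log n)^2$, obtained by combining $c_1c_2\ge\frac{n}{24}$ with $p\ge\frac{25(\log n)^2}{n}$. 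I would then split according to whether $p$ is small, the cut being $p\le\frac18$ versus $\frac18<p\le\frac12$; the reason for putting the threshold well below $\frac12$ will become clear at the end.

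In the regime $p\le\frac18$ I write $f=(1-p)^{c_1}\big(1+r^{c_1}\big)$ with $r:=\frac{p}{1-p}\le\frac17$, so that $\ln f\le c_1\ln(1-p)+r^{c_1}$ using $\ln(1+x)\le x$. Since $r<1$ and $c_1\ge\frac32$ we have $r^{c_1}\le r^{3/2}$, and an elementary computation (reducing, via $c_1\ge\frac32$ and $|\ln(1-p)|\ge p\ge\frac78 r$, to the numerical inequality $\sqrt r\le 0.39$, which holds as $r\le\frac17$) yields $r^{3/2}\le 0.3\,c_1|\ln(1-p)|$. Hence $\ln f\le -0.7\,c_1|\ln(1-p)|\le -0.7\,c_1p$. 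Multiplying by $c_2$ and invoking the resource inequality, $c_2\ln f\le -0.7\cdot\frac{25}{24}(\log n)^2=-0.729(\log n)^2$, which beats $(\log n)^2\ln 2=0.693(\log n)^2$ and gives the claim with room to spare.

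In the regime $\frac18<p\le\frac12$ the map $p\mapsto f$ is convex on $[\frac18,\frac12]$ (its second derivative $c_1(c_1-1)[p^{c_1-2}+(1-p)^{c_1-2}]$ is nonnegative for $c_1\ge1$), hence maximized at the endpoint $p=\frac18$, giving $f\le(7/8)^{c_1}+(1/8)^{c_1}=(7/8)^{c_1}(1+7^{-c_1})$. Thus $\ln f\le c_1\ln\frac78+7^{-c_1}\le\frac12 c_1\ln\frac78$, where the last step uses $7^{-c_1}\le-\frac12 c_1\ln\frac78$ for $c_1\ge\frac32$ (it holds at $c_1=\frac32$, and the left side decreases while the right side increases). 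Now $c_2\ln f\le-\tfrac12|\ln\tfrac78|\,c_1c_2\le-\Omega(n)$ by $c_1c_2\ge\frac{n}{24}$, and since $n\gg(\log n)^2$ this dominates the $(\log n)^2\ln 2$ term, again yielding $\le-\Omega((\log n)^2)$.

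The only genuinely delicate point, and the one I expect to be the main obstacle, is the small-$p$ regime, where the admissible constant is essentially pinned down by the numbers $25$ and $24$ in the hypotheses: one needs $\ln f\le-\delta c_1p$ with $\delta\cdot\frac{25}{24}>\ln 2$, i.e.\ $\delta>0.665$. The true ratio $-\ln f/(c_1p)$ decreases as $p$ grows and already dips to about $0.68$ at $p=\frac14$, so a cut at $\frac14$ would push the crude bounds $\ln(1+x)\le x$ and $r^{c_1}\le r^{3/2}$ onto a knife's edge; taking the threshold at $\frac18$ (where the ratio stays near $0.79$) is what leaves the comfortable margin exploited above. Everything else, namely the convex regime and the passage between logarithms, is slack.
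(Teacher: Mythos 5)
Your proof is correct, and it shares the paper's basic mechanism --- the factorization $p^{c_1}+(1-p)^{c_1}=(1-p)^{c_1}(1+r^{c_1})$ together with the product bound $pc_1c_2\ge\frac{25}{24}(\log n)^2$ pitted against the prefactor $n^{\log n}=e^{(\ln 2)(\log n)^2}$ --- but it is organized quite differently, and the difference is exactly where your ``delicate point'' comes from. The paper's first move is to observe that $g(p)=p^{c_1}+(1-p)^{c_1}$ is convex and symmetric about $1/2$, hence maximized on the whole interval $[\frac{25(\log n)^2}{n},\,1-\frac{25(\log n)^2}{n}]$ at its endpoints; this reduces the entire lemma to the single value $p=\frac{25(\log n)^2}{n}$, where $r$ is of order $(\log n)^2/n$ and the correction term satisfies the trivial bound $c_2 r^{c_1}\le n\bigl(\tfrac{25(\log n)^2}{n-25(\log n)^2}\bigr)^{3/2}\le 1$, after which $n^{\log n}g(p)^{c_2}\le e^{(\log n)^2+1-pc_1c_2}\le e^{-(\log n)^2/24}$ with no numerical tightrope at all. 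You invoke this same convexity only on $[1/8,1/2]$ and instead prove a bound uniform over all $p\le 1/8$, which is why you are forced into the knife-edge comparison $0.7\cdot\frac{25}{24}>\ln 2$ and the careful choice of the cut at $1/8$; I checked your numerics ($\sqrt{r}\le\sqrt{1/7}<0.39375$, $7^{-3/2}<\frac{3}{4}\ln\frac{8}{7}$, $0.729>0.693$) and they all hold, so the argument goes through. The trade-off: your version is self-contained in each regime and makes the dependence on the constants $25$ and $24$ explicit, while the paper's endpoint reduction is shorter and leaves a comfortable margin everywhere; had you applied your own convexity observation over the full interval rather than just $[1/8,1/2]$, the small-$p$ case would have collapsed to the paper's three-line computation.
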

\begin{proof}
Consider $f(p) = (p^{c_1} + (1-p)^{c_1})^{c_2}$ on the interval $[0, 1].$ 
 It follows (by standard calculus) that $f(p)$ attains minimum at $p = \frac{1}{2}$.
 In particular,  for $p \in [\frac{25 (\log n)^2}{n}, 1 - \frac{ 25 (\log n)^2}{n}],$ $f(p)$ attains its maximum value at the endpoints. Therefore, it suffices to prove the statement when $ p = \frac{25 (\log n)^2}{n}$ since $f$ is symmetric about $1/2$.
 
Now,
$$ (p^{c_1} + (1-p)^{c_1})^{c_2} \leq e^{-pc_1c_2} \hspace{.1cm} e^{c_2 y^{c_1}}$$ where $y = \frac{p}{1-p}.$

For $p\in[\frac{25 (\log n)^2}{n},1/2]$, we first observe that for any $1<c_1,c_2\le n$, the expression $c_2y^{c_1}$ is bounded. Indeed, 
\begin{equation}\label{eq1}
c_2 y^{c_1} \leq c_2{\left(\frac{25(\log n)^2}{n - 25(\log n)^2}\right)}^{c_1}\le 1.
\end{equation}
The last inequality follows from the fact that $3/2\leq c_1,c_2\le n$. Therefore
$$ n^{\log n} (p^{c_1} + (1-p)^{c_1})^{c_2} \leq e^{(\log n)^2 - pc_1c_2}.$$

Since $c_1c_2\geq n/24$ the right hand side in the last inequality is at most  $\exp(-\frac{\log^2 n}{24})$. This completes the proof.
\end{proof} 

In what follows, unless otherwise mentioned, $p\in[\frac{25 (\log n)^2}{n},1-\frac{25 (\log n)^2}{n}]$. 
\begin{lem}\label{lem01}
Suppose $A$ is an abelian group which is not a $2$-group, and suppose $S$ is chosen randomly by picking each pair\footnote{If $x=-x$ then the pair is just the singleton $\{x\}$}  $(x,-x)$ independently
with probability $p$ where $\frac{25 (\log n)^2}{n} \leq p \leq \frac{1}{2}.$ Then,  
$$\bP(\textrm{There\  exist\ } 1 < H\leq K < A) \textrm{\ such\ that\ } S \setminus K  \textrm{is\  a\ union\  of\  }H-\textrm{cosets}) \leq O(\exp(-\log^2 n)) .$$
\end{lem}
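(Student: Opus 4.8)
The plan is to apply a union bound over all pairs of subgroups $1 < H \le K < A$ and, for each fixed pair, to factor the relevant event into independent ``all-or-nothing'' conditions of the form handled by Lemma \ref{lem00}. First I would note that every subgroup of an abelian group of order $n$ is generated by at most $\log_2 n$ elements, so there are at most $n^{\log_2 n}$ subgroups and at most $n^{O(\log n)}$ pairs $(H,K)$. It then suffices to bound, for each fixed pair, the probability $\bP(E_{H,K})$ that $S\setminus K$ is a union of $H$-cosets by $(p^{c_1}+(1-p)^{c_1})^{c_2}$ with $c_1,c_2$ satisfying the hypotheses of Lemma \ref{lem00}; the super-polynomially small bound it yields, together with the comfortable margin we obtain in $c_1c_2$, will absorb the $n^{O(\log n)}$ pairs.

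Fix $(H,K)$. I would use two structural facts: the set $S$ is symmetric, $S=-S$, since it is built from symmetric pairs; and because $H\le K$, every $H$-coset is either contained in $K$ or disjoint from $K$. Hence $E_{H,K}$ is exactly the event that every $H$-coset $C$ disjoint from $K$ satisfies $C\subseteq S$ or $C\cap S=\emptyset$. I would then group these cosets into orbits under negation $C\mapsto -C$. The crucial observation is that the all-or-nothing condition on an orbit $\{C,-C\}$ depends only on the random choices of the symmetric pairs $\{x,-x\}$ lying in $C\cup(-C)$, and each such pair lies in exactly one orbit; therefore the conditions for distinct orbits are independent and $\bP(E_{H,K})$ factorizes over orbits.

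To estimate a single factor: if $C\ne -C$ the coset $C$ meets $|H|$ distinct symmetric pairs and all-or-nothing forces all on or all off, giving $p^{|H|}+(1-p)^{|H|}$; if $C=-C$ the coset splits into $a$ pairs and $b$ singletons with $2a+b=|H|$, giving $p^{a+b}+(1-p)^{a+b}$ with $a+b\ge|H|/2$. As $p^t+(1-p)^t$ is decreasing in $t$ for $p\le 1/2$, each factor is at most $p^{c_1}+(1-p)^{c_1}$ with $c_1=\lceil|H|/2\rceil$, and the number of orbits is $c_2\ge (n-|K|)/(2|H|)$. Since $K$ is proper, $n-|K|\ge n/2$, so $c_1c_2\ge (n-|K|)/4\ge n/8$, comfortably above the $n/24$ that Lemma \ref{lem00} requires.

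The main obstacle is ensuring the remaining hypotheses $c_1,c_2\ge 3/2$ of Lemma \ref{lem00}, which can fail in two degenerate regimes. The bound $c_2\ge 3/2$ fails only when a single negation-orbit lies outside $K$, forcing $|H|\ge (n-|K|)/2=\Omega(n)$; then the lone factor is already $p^{\Omega(n)}+(1-p)^{\Omega(n)}\le 2e^{-\Omega((\log n)^2)}$, which I would bound by hand. The bound $c_1\ge 3/2$ fails only for $|H|=2$, say $H=\{0,h\}$ with $h$ an involution, where a coset $C=\{g,g+h\}$ gives the vacuous factor $p+(1-p)=1$ exactly when $C$ is itself a symmetric pair, i.e. $2g=h$. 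This is where the hypothesis that $A$ is not a $2$-group is used: such degenerate cosets occur only if $h\in 2A$ and then number at most $|A[2]|/2$, where $A[2]=\{x:2x=0\}$, while the nontriviality of the odd part of $A$ forces $|A[2]|\le n/3$; discarding these still leaves $\Omega(n)$ genuine cosets, each contributing a factor at most $p^2+(1-p)^2$, so restricting the product to non-degenerate orbits yields $c_1=2$ and $c_2=\Omega(n)$ with $c_1c_2\ge n/24$. With these cases handled, Lemma \ref{lem00} applies to each surviving pair and the union bound completes the argument.
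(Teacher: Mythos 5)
Your proof is correct and follows essentially the same route as the paper's: factor the event over negation-orbits of the $H$-cosets outside $K$, bound each independent factor by $p^{c}+(1-p)^{c}$, invoke the non-$2$-group hypothesis to discard the vacuous factors when $|H|=2$ (via the same $\le n/3$ bound on the Sylow $2$-subgroup), and union-bound over generator sets using Lemma~\ref{lem00}. Your use of the monotonicity of $p^t+(1-p)^t$ to collapse the paper's three-way coset case analysis into a single exponent $\lceil|H|/2\rceil$, and your explicit treatment of the single-orbit degenerate case, are minor organizational improvements rather than a different argument.
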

\begin{proof}
Observe that since $H \subset K\subset A$,  $A\setminus K$ is also a union of $H-$ cosets, and let the set of these cosets be denoted $\mathcal{H}.$   Write $A':=A \setminus K$  and $S':=S \setminus K$. We shall denote the order of an element $a$ by $o(a)$ and $a+a$ is denoted $2a.$

Define \begin{eqnarray*} O_2 &:=& \{a \in A : o(a)\leq  2\},\\ J &:=& K \cap O_2,\\ I &:=& \{a \in A' :  2a \in H\},\\ I'&:=&A \setminus (K \cup I) ,\\ L &:=& \{a \in H : o(a) = 2\}.\end{eqnarray*}
Let $|H| =h,$ $|I| = i$,  $|K| = k,$ $|J| = j$  and $|L| = l.$ We have $|O_2| = m.$

The probability that $S'$ is a union of $H-$cosets is precisely 
$$\bP\left(\mathop{\bigcap}\limits_{g+H \in \mathcal{H}}\bigg\{ (g+H \subseteq S) \textrm{\ or\ } (g+H \cap S = \emptyset)\bigg\}\right).$$ 

 Let $g \in I'.$ If $h_1 \in H,$ and if possible $g + h_1\in K \cup I$ then, it follows that  $g + h_1 \in I$ so that $g \in I$ contradicting that $g\in I'$. Therefore, if $g \in I'$ then, $g+H \subseteq  I'.$ Moreover $-g\notin g + H $ which implies that $g+H\neq -g+H$. Also, observe that $I' \cap O_2 = \emptyset$.  Since each pair $(g, -g)$ is independently picked with probability $p$ into $S$ we have that $$g+H \subseteq S' \Longleftrightarrow -g+H \subseteq S'.$$ 
Since there are $\frac{n-k-i}{2h}$ pairs of cosets in $I'$ of the type $(g+H,-g+H)$, the probability that for every $g+H\in I'$ either $g+H\subset S$ or $g+H\cap S=\emptyset$ is exactly $(p^h + (1-p)^h)^{\frac{n-k-i}{2h}}.$

Suppose that $g \in I$. In this case note that $g+H = -g+H$. Suppose $o(g) =2.$ Then for $h \in H,$  we have $2(g+h) = 0$ if and only if $o(h) =2.$
In particular, the number of order $2$ elements in $g+H$ is precisely the number of order two elements in $H$. Since there are $l$ elements in $g+H$ of order two and $h-l$ elements of order greater than two, and since the number of $H$ cosets $g+H$ with $g\in I, o(g) = 2$ that contain order two elements is precisely $\frac{m-j}{l}$, the probability that every coset $g+H$ with $g\in O_2\cap I$ satisfies that $g+H\cap S=\emptyset$ or $g+H\subset S$ is precisely $(p^{\frac{h+l}{2}} + (1-p)^{\frac{h+l}{2}})^{\frac{m-j}{l}}.$

Finally, now suppose that $g \in I$ and $o(g) > 2$. In this case it follows that $g+H$ has no element of order two. There are exactly $i - \frac{m-j}{l}h$ elements $g\in I$ of this type and furthermore, the set of these elements must also necessarily be the union of $\frac{1}{h} (i - \frac{m-j}{l}h)$ $H-$cosets. If $g+H \subseteq S',$ one need to include the $\frac{h}{2}$ pairs $(x,-x)$ of the coset into $S$, so the probability that every $g+H$ with $o(g)>2$ is either disjoint with $S$ or is contained in $S$ is precisely $(p^{\frac{h}{2}} + (1-p)^{\frac{h}{2}})^{(\frac{i}{h} - \frac{m-j}{l})}.$

Again, as in the previous lemma, set $y:=\frac{p}{1-p}$. Then, from the above discussions, for a fixed
$H\subset K,$  we have,
\begin{align}
\bP(S' = \text{\ union\ of\ $H-$\ cosets}) &= (p^h +(1-p)^h)^{\frac{n-k-i}{2h}} (p^{\frac{h+l}{2}} +
(1-p)^{\frac{h+l}{2}})^{\frac{m-j}{l}} (p^{\frac{h}{2}} +(1-p)^{\frac{h}{2}})^{(\frac{i}{h} - \frac{m-j}{l})}\nonumber\\
&\leq  (1-p)^{\frac{n}{4}} \exp(\frac{n-k-i}{2h}y^h) \exp( \frac{m-j}{l}
y^\frac{h+l}{2}) \exp((\frac{i}{h} - \frac{m-j}{l}) y^\frac{h}{2})\nonumber
\end{align}
The last  inequality is obtained by using the facts that $k \leq \frac{n}{2}$ and $j \leq m.$  Furthermore, note that we may without loss of generality assume that $p \in [\frac{25 (\log n)^2}{n}, \frac{1}{2}].$  We shall now show that each of $\exp(\frac{n-k-i}{2h}y^h),
\exp( \frac{m-j}{l} y^\frac{h+l}{2}), \exp((\frac{i}{h} - \frac{m-j}{l}) y^\frac{h}{2})$ is bounded. 

If $h >2,$ then, using inequality (\ref{eq1}) of lemma \ref{lem00} and taking $c_1 = h, c_2 = \frac{n-k-i}{2h},$ it follows that $\exp(\frac{n-k-i}{2h}y^h)$ is bounded. 
Again using the same inequality, and taking $c _1 =\frac{h +l}{2} > 1$ and $c_2 =  \frac{m-j}{l} \leq n$ it follows that $\exp(\frac{m-j}{l} y^\frac{h+l}{2})$ is bounded.
As for $\exp((\frac{i}{h} - \frac{m-j}{l}) y^\frac{h}{2}),$ we set $c_1 = \frac{h}{2} >1$ and $c_2=(\frac{i}{h} - \frac{m-j}{l}) < n$. 
To pick a pair of non-trivial subgroups $H$ and $K,$ it suffices to only pick sets of generators for these groups which can be done in at most $(n^{\log n})^2=2^{2\log^2n}$ ways. Hence  
$$\bP(\text{There exist}\hspace{.1cm} 1 < H \leq K < A : |H|>2, S \setminus K =
\textrm{\ union\  of\  }H-\textrm{cosets}) \leq O\left(2^{2(\log n)^2}(1-p)^{\frac{n}{4}}\right).$$  By lemma \ref{lem00}, we have $2^{2(\log n)^2}(1-p)^{\frac{n}{4}} \leq \exp({-\frac{17}{4}(\log n)^2})$ for $p \in [\frac{25 (\log n)^2}{n}, \frac{1}{2}].$

If $h =2,$ then, firstly note that if $g$ satisfies $2g\in H$ then $o(g)|4$, so $g$ lies in the Sylow $2$-subgroup of $A$. Since $A$ is not a $2$-group by assumption, it follows that $i\le n/3$. Hence using that $j \leq m, k \leq \frac{n}{2}$ we have
  \begin{align}
\bP(S \setminus K \text{\ is a union of}\hspace{.05cm} H-\text{cosets}) &= (p^2 + (1-p)^2)^{\frac{n-k-i}{4}} (p^{\frac{2+l}{2}} +
(1-p)^{\frac{2+l}{2}})^{\frac{m-j}{l}} \nonumber \\
&\leq (1-p)^{\frac{n}{12}} \exp(\frac{n-k-i}{4} y^2 ) \exp(\frac{m-j}{l}y^{\frac{2+l}{2}} ) \label{eq11}
\end{align}
As before, the boundedness of  $\exp(\frac{n-k-i}{4} y^2 )$ follows by setting $c_1 = 2$ and $c_2 =\frac{n-k-i}{4} < n$ and the boundedness of $\exp(\frac{m-j}{l}y^{\frac{2+l}{2}} )$ follows by setting $c_1 = \frac{2+l}{2}> 1, c_2 = \frac{m-j}{l} < n$. Again, 
$$\bP(\text{There exist}\hspace{.1cm} 1 < H \leq K < A : |H|=2, S \setminus K =\textrm{\ union\  of\  }H-\textrm{cosets})\leq O\left(2^{2(\log n)^2}(1-p)^{\frac{n}{12}}\right)$$ and by lemma \ref{lem00}, this is at most $\exp({-\frac{1}{4}(\log n)^2})$. 
\end{proof}

The next lemma again is an extension of a result of \cite{dobson}. For $S\subset A$ and $\phi\in Aut(A)$, we say that \textit{$\phi$ normalizes S} if $\phi(S)=S$.
\begin{lem}\label{lem0}
Suppose $A$ is abelian, and let $S$ be a random inverse closed subset of $A$ with each pair $(x,-x)$ picked with probability $p$. Let  $i : A  \rightarrow A$ be  the automorphism of $A$ defined by $i: x \rightarrow -x.$ Then the probability that there exists $\phi \in Aut(A)\setminus \{1,i\}$ such that $S$ is normalized by $\phi$ is at most  $O(\exp(-\frac{21}{4} (\log n)^2).$ 
 \end{lem}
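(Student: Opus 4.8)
The plan is to run a union bound over all $\phi\in Aut(A)\setminus\{1,i\}$ and, for each such $\phi$, to bound $\bP(\phi(S)=S)$ by a quantity to which Lemma \ref{lem00} directly applies. The factor $n^{\log n}$ that Lemma \ref{lem00} expects arises naturally here: an abelian group of order $n$ has a generating set of size at most $\log n$ (each invariant factor is at least $2$), and an automorphism is determined by the images of a generating set, so $|Aut(A)|\le n^{\log n}$. The first structural observation is that every $\phi\in Aut(A)$ commutes with $i$, since $\phi(-x)=-\phi(x)$; hence $\phi$ permutes the symmetric pairs $\{x,-x\}$ (and the singletons $\{x\}$ with $2x=0$) among themselves. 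These pairs/singletons are exactly the independent coordinates used to build $S$, so $S$ is $\phi$-invariant if and only if $S$ is a union of orbits of $\phi$ acting on this set of objects.

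Fixing $\phi$ and letting $\ell_1,\ell_2,\dots$ denote the sizes of the orbits of $\phi$ on these objects, independence of the coordinates gives
\[
\bP(\phi(S)=S)=\prod_j\left(p^{\ell_j}+(1-p)^{\ell_j}\right),
\]
because within each orbit either all its objects lie in $S$ or none do. An orbit has size $1$ precisely when $\phi$ fixes the corresponding pair, i.e. when $\phi(x)\in\{x,-x\}$; such orbits contribute the trivial factor $1$ and impose no constraint. Thus the whole task reduces to showing that many objects lie in orbits of size at least $2$.

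The key point is that the set $F$ of elements lying in a $\phi$-fixed object is $\{x:\phi(x)=x\}\cup\{x:\phi(x)=-x\}=\mathrm{Fix}(\phi)\cup\mathrm{Fix}(i\phi)$, a union of two subgroups of $A$. Since $\mathrm{Fix}(\phi)=A$ forces $\phi=1$ and $\mathrm{Fix}(i\phi)=A$ forces $\phi=i$, the hypothesis $\phi\notin\{1,i\}$ makes both subgroups proper. A union of two proper subgroups $H_1,H_2$ satisfies $|H_1\cup H_2|\le |H_1|+|H_2|-|H_1||H_2|/n = n\bigl(a+b-ab\bigr)$ with $a,b\le\tfrac12$, so $|F|\le\tfrac34 n$; consequently at least $\tfrac{n}{4}$ elements, hence at least $M\ge\tfrac{n}{8}$ objects, lie in orbits of size $\ge 2$. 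Using that $p^{\ell}+(1-p)^{\ell}\le\bigl(p^2+(1-p)^2\bigr)^{\ell/2}$ for $\ell\ge 2$ (the $\ell_2$-norm of $(p,1-p)$ dominates its $\ell_\ell$-norm), the product above is at most $\bigl(p^2+(1-p)^2\bigr)^{M/2}$.

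Combining the two estimates,
\[
\bP\bigl(\exists\,\phi\in Aut(A)\setminus\{1,i\}:\phi(S)=S\bigr)\ \le\ n^{\log n}\,\bigl(p^2+(1-p)^2\bigr)^{M/2},
\]
and I would finish by invoking Lemma \ref{lem00} with $c_1=2$ and $c_2=M/2$: here $3/2\le c_1,c_2\le n$ and $c_1c_2=M\ge n/8\ge n/24$, so the right-hand side is $n^{-\Omega(\log n)}$; tracking the constant through the endpoint $p=\tfrac{25(\log n)^2}{n}$ against $n^{\log n}=2^{(\log n)^2}$, exactly as in Lemmas \ref{lem00} and \ref{lem01}, produces the explicit exponential estimate. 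The step I expect to be the main obstacle is the structural one in the third paragraph: recognizing that the ``free'' objects are governed by the two subgroups $\mathrm{Fix}(\phi)$ and $\mathrm{Fix}(i\phi)$, and bounding their union away from all of $A$. Everything afterward is the same calculus-based estimation already established earlier; I note that if one wants the sharpest possible constant, the crude bound $|F|\le\tfrac34 n$ can be improved by separating out the few automorphisms whose fixed subgroup is unusually large and giving those a cheaper union-bound contribution.
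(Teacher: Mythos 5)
Your proof is correct in substance but takes a genuinely different route from the paper's. The paper argues by cases on the order of $\phi$: it replaces $\phi$ by a power of odd prime order $q$, or by a power with $\phi^2=i$ (when $i\in\langle\phi\rangle$), or by an involution distinct from $i$ (when $i\notin\langle\phi\rangle$), and in each case counts the orbits explicitly via the centralizer sizes $|C_A(\phi)|$ and $|C_A(\langle i,\phi\rangle)|$, arriving at a bound of the form $(p^{c_1}+(1-p)^{c_1})^{c_2}$ with $c_1c_2\ge n/4$ before invoking Lemma \ref{lem00}. You instead treat an arbitrary $\phi$ directly: you factor $\bP(\phi(S)=S)$ over the orbits of $\phi$ on the inverse-closed pairs, and control the trivial orbits by the union-of-two-proper-subgroups estimate $|\mathrm{Fix}(\phi)\cup\mathrm{Fix}(i\phi)|\le \frac{3}{4}n$ together with the norm inequality $p^\ell+(1-p)^\ell\le(p^2+(1-p)^2)^{\ell/2}$ for $\ell\ge 2$. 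This is cleaner and avoids the case analysis and the ``replace $\phi$ by a suitable power'' step entirely. The price is a factor of $2$ in the exponent: you obtain $c_1c_2=M\ge n/8$ where the paper gets $n/4$, so as written your argument yields $O(\exp(-\frac{17}{8}(\log n)^2))$ rather than the stated $O(\exp(-\frac{21}{4}(\log n)^2))$. This is immaterial for the paper's purposes (Theorem \ref{smallaut} and everything downstream only require a bound of the form $\exp(-(\log n)^2)$, and the authors explicitly disclaim optimizing constants), and you flag the issue yourself, but strictly speaking the constant in the lemma as stated is not reproduced without the sharpening you only sketch. All the individual steps check out: $|Aut(A)|\le n^{\log n}$, the independence-over-orbits factorization, $|H_1\cup H_2|\le n(a+b-ab)\le\frac{3}{4}n$ for proper subgroups $H_1,H_2$, the passage from $n/4$ elements to $M\ge n/8$ moved pairs, and the applicability of Lemma \ref{lem00} with $c_1=2$, $c_2=M/2$, since $c_1c_2\ge n/8\ge n/24$.
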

 \begin{proof}
  
  Fix $\phi \in Aut(A)$ and suppose that $\phi$ normalizes $S$. Since $| i | = 2,$ we have $m = |C_A(i)|$ where $C_A(i)$ is the centralizer of $i$ in A. Let $|C_A(\phi)| = c$ and  $| C_A(\langle i,\phi \rangle)| = k.$\\
 Suppose that $|\phi|$ is divisible by an odd prime $q.$\\
  In this case, without loss of generality we assume $|\phi|= q,$
otherwise we may replace $\phi$ with a suitable power. Observe that, if $ a \in S$ then $\{a, \phi(a), \hdots, \phi^{q-1}(a) \} \subseteq S.$ 
Therefore, 
$$\bP\big(\phi(S)\subset S \big) 
   = (p^q + (1 - p) ^ q )^{\frac{m - k}{q}}(p^q + (1 - p) ^ q)^{\frac{n- (c + m - k)}{2q}}\leq (p^q + (1 - p) ^ q )^{\frac{n}{4q}}.$$
  The last inequality follows by using $k \leq m, c \leq \frac{n}{2}$ and $(p^q + (1
- p) ^ q ) \leq 1.$ Since
$| Aut(A) | \leq n^{\log_2n},$ it follows that the probability that there exists $\phi \in Aut(A)\setminus\{1,i\}$ such that $\phi(S)=S$ is at most $n^{\log_2n} (p^q + (1 - p)
^ q )^{\frac{n}{4q}}.$ We use lemma \ref{lem00}, by setting $c_1 = q$ and $c_2 = \frac{n}{4q}$  to see that this probability is $O(\exp(-\frac{21}{4} (\log n)^2).$

Now suppose $|\phi|$ is a power of two. Two cases arise:\\
\textbf{\textit{Case 1:}} $ i \in \langle\phi \rangle$ \\
  By replacing $\phi$ by a suitable power, we may assume that $\phi^2 = i.$ Then, similar to the case $1,$
$$ \bP(\phi(S)\subset S) = (p^2 + (1-p)^2)^{\frac{m-c}{2}}(p^2 +
(1-p)^2)^{\frac{n-m}{4}}\leq (p^2 + (1-p)^2)^{\frac{n}{8}}.$$
The last inequality is obtained by using $m \leq \frac{n}{2}$ and $ c \leq m.$
Again, we use lemma \ref{lem00} with 
$c_1 = 2$ and $c_2 = \frac{n}{8}$ to see that the above probability is at most
 $2^{(\log n)^2}(p^2 + (1-p)^2)^{\frac{n}{8}}\leq O(\exp(-\frac{21}{4} (\log n)^2).$
  
\textbf{Case 2:} $i \notin \langle\phi \rangle$ \\ 
In this case  
$$   \bP(\phi(S)\subset S) = (p^2 + (1-p)^2)^{\frac{m}{2}}(p^2 +
(1-p)^2)^{\frac{n-m}{4}} = (p^2 + (1-p)^2)^{\frac{m+n}{4}}.$$
Again, setting $c_1 = 2, c_2 = \frac{m+n}{4}$ and applying lemma \ref{lem00} we see that the above probability is at most $ O(\exp(-\frac{23}{2} (\log n)^2)$. \end{proof}
 
 The following lemma is proved for the abelian groups of Types I and II.
\begin{lem}\label{lem03} Let $A$ be an abelian group of  Type I or Type II. Let $C$ be a cyclic group, and $Z$ an elementary abelian $2$ group. For a subset $S\subset A$, we call a pair of subgroups $(C, Z)$ of $A$,  good for $S$, if 
\begin{enumerate}
\item $A = C \times Z.$ 
\item $|C|=t\geq 4.$  
\item There exist $S' \in \{C, \emptyset, \{0\}, C \setminus \{0\}\},$ and $S'' \subset Z$ such that $S = S' \times S''$. \end{enumerate}
For a random inverse-closed subset $S\subset A$,  the probability that there exists a pair $(C,Z)$ good for $S$ is at most $O\left(\exp(-\frac{25 (\log n)^2(n-1)}{2n})\right).$
\end{lem}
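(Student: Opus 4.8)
The plan is to exploit the structural restrictions built into Types I and II, which turn out to make the event in question almost vacuous, so that essentially no union bound over decompositions is needed and one is left with a one-line probability estimate. The starting observation I would record is this: if $(C,Z)$ is good for some $S$, then in particular $A=C\times Z$ with $C$ cyclic and $Z$ elementary abelian of exponent $2$. Passing to primary (Sylow) decompositions, the product of the odd-order Sylow subgroups of $A$ (its \emph{odd part}) coincides with the odd part of $C$, since $Z$ is a $2$-group. As $C$ is cyclic, its odd part is cyclic, and hence \emph{the odd part of $A$ must itself be cyclic} whenever a good pair exists.

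For a Type II group $A\cong \Z_2^r\times N$, the odd part is exactly $N$, which is non-cyclic by hypothesis. By the observation above, no good pair $(C,Z)$ can exist, so the probability in question is $0$ and the stated bound holds trivially. For a Type I group, $(|A|,6)=1$ forces $|A|$ to be odd, so $A$ has no element of order $2$ and the only elementary abelian $2$-subgroup is $Z=\{0\}$; thus a good pair must be $(A,\{0\})$ with $A$ itself cyclic. Again, if $A$ is not cyclic the probability is $0$, so it remains to treat $A$ cyclic of odd order $n$ (here $t=n\ge 5>4$, since $(n,6)=1$). In this case condition (3) collapses to $S=S'$ with $S'\in\{A,\emptyset,\{0\},A\setminus\{0\}\}$, and since $0\notin S$ the only admissible outcomes are $S=\emptyset$ and $S=A\setminus\{0\}$.

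Finally I would compute directly. Since $A$ has odd order it has no involutions, so $S$ is determined by $(n-1)/2$ independent pair-choices; hence $\bP(S=\emptyset)=(1-p)^{(n-1)/2}$ and $\bP(S=A\setminus\{0\})=p^{(n-1)/2}$. Adding these and using, exactly as in Lemma \ref{lem00}, that $g(p)=p^{(n-1)/2}+(1-p)^{(n-1)/2}$ is maximized over the admissible range of $p$ at the endpoints (with common value by the symmetry $p\leftrightarrow 1-p$), together with $1-x\le e^{-x}$, gives
$$\bP\big(\exists\,(C,Z)\text{ good for }S\big)=p^{\frac{n-1}{2}}+(1-p)^{\frac{n-1}{2}}\le 2\Big(1-\tfrac{25(\log n)^2}{n}\Big)^{\frac{n-1}{2}}\le 2\exp\!\Big(-\tfrac{25(\log n)^2(n-1)}{2n}\Big),$$
which is precisely the claimed bound.

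The main obstacle is really the first step: recognizing that the defining hypotheses of Types I and II are exactly what is needed to force the odd part of $A$ to be cyclic, thereby eliminating Type II outright and collapsing Type I to a single cyclic group admitting a \emph{unique} decomposition $A=A\times\{0\}$. Once this is seen, there is no superpolynomial family of decompositions to union-bound over (unlike in Lemmas \ref{lem01} and \ref{lem0}, where the factor $n^{\log n}$ appears), and the remaining estimate is elementary; the only minor care needed is to handle both halves of the range of $p$ via the endpoint maximization rather than assuming $p\le 1/2$.
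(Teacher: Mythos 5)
Your proof is correct and follows essentially the same route as the paper: Type II is eliminated because $A=C\times Z$ would force the odd part $N$ to be cyclic, and for Type I the odd order of $A$ collapses the good pair to $(A,\{0\})$ with $A$ cyclic, leaving only $S=\emptyset$ or $S=A\setminus\{0\}$, whose combined probability $p^{(n-1)/2}+(1-p)^{(n-1)/2}$ is bounded at the endpoints of the range of $p$. The paper dismisses these last steps as "trivial" and "easy to check"; your write-up simply supplies the details it omits.
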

\begin{proof}
The lemma is trivial in the case $A \cong \mathbb{Z}^r_2 \times N,$ where $N$ is an odd order group which is not cyclic. Let $A$ be  abelian with $(|A|,6)=1$. For a fixed $S\subset A$ which is inverse-closed, if $(C,Z)$ is good for $S$, then $A \cong C$, $Z$ is trivial, and furthermore, $S' \in \{\emptyset, A, (0), A \setminus \{0\}\}, S'' \in \{ \emptyset, (0) \}.$  Since $S$ is inverse-closed and $0 \notin S,$ there are two possibilities: $S=\emptyset$ or $S=A\setminus\{0\}$. In either case, it is easy to check that   
 the probability that there exist $(C, Z, S', S''),$ satisfying the hypotheses is at most $\exp(-\frac{25 (\log n)^2(n-1)}{2n}).$
\end{proof}

 For the abelian groups mentioned in the beginning of this section, we state the extended version of Theorem 1.5 from \cite{dobson}. The proof is along the same lines as the proof that appears in \cite{dobson}, so we skip the details.

\begin{thm}\label{smallaut}
 Let $\Gamma_p:=\Gamma_p(A,S)$ be the random Cayley graph with $\frac{25(\log n)^2}{n} \leq p \leq 1- \frac{25(\log n)^2}{n}.$ Then, $$\bP(Aut(\Gamma_p) \not\cong A \rtimes \langle i \rangle ) \leq O(\exp(-\log^2 n),$$ where $i:A\to A$ is the automorphism $i(x) = -x$. \end{thm}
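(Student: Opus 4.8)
The plan is to follow the framework of \cite{dobson} and isolate the two distinct ways in which $Aut(\Gamma_p)$ can be strictly larger than $A \rtimes \langle i \rangle$, and then to bound each by the probabilistic estimates already established. Throughout write $G := Aut(\Gamma_p)$ and let $A_R \le G$ denote the regular image of $A$ acting by translations. A standard fact is that the normalizer of a regular subgroup inside the full symmetric group is its holomorph; intersecting with $G$ yields $N_G(A_R) = A_R \rtimes Aut(A,S)$, where $Aut(A,S) := \{\phi \in Aut(A) : \phi(S) = S\}$. Since $S = -S$, we always have $i \in Aut(A,S)$, so $A \rtimes \langle i \rangle \le N_G(A_R) \le G$. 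Consequently $G \cong A \rtimes \langle i \rangle$ can fail only if either (i) $A_R$ is not normal in $G$, or (ii) $A_R \triangleleft G$ but $Aut(A,S) \supsetneq \langle i \rangle$.

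For case (ii), the event $Aut(A,S) \ne \langle i \rangle$ is precisely the event that some $\phi \in Aut(A) \setminus \{1,i\}$ normalizes $S$, whose probability Lemma \ref{lem0} bounds by $O(\exp(-\tfrac{21}{4}(\log n)^2))$. This case is therefore immediate once the reduction is set up.

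For case (i), I would invoke the structural dichotomy underlying \cite{dobson}: when $A$ is abelian and $A_R$ fails to be normal in $G$, the set $S$ is forced into one of two explicit shapes. Either there exist subgroups $1 < H \le K < A$ for which $S \setminus K$ is a union of $H$-cosets (a wreath/blow-up type obstruction), or $A$ splits as $A = C \times Z$ with $C$ cyclic of order at least $4$ and $S = S' \times S''$ of the restricted form appearing in Lemma \ref{lem03}. Here the Type~I and Type~II hypotheses on $A$ are exactly what rule out the remaining degenerate configurations (for instance, $A$ being a $2$-group, or admitting small or cyclic direct factors), so that these two shapes exhaust the non-normal case. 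The probability of the first shape is controlled by Lemma \ref{lem01} and that of the second by Lemma \ref{lem03}, each being $O(\exp(-\Omega((\log n)^2)))$.

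Combining the three contributions by a union bound, $\bP(G \not\cong A \rtimes \langle i \rangle)$ is dominated by the largest of the estimates from Lemmas \ref{lem0}, \ref{lem01}, and \ref{lem03}, namely $O(\exp(-(\log n)^2))$, which is the claimed bound. The main obstacle is case (i): the substantive work is entirely the group-theoretic classification showing that non-normality of $A_R$ in $G$ forces one of the two combinatorial descriptions of $S$, and this is precisely the engine supplied by \cite{dobson}. The only genuinely new point to verify is that the Type~I and Type~II restrictions align with their case analysis, so that no further obstruction survives; once that reduction is secured, the remainder is a routine application of the tail bounds from Lemma \ref{lem00} already proved above.
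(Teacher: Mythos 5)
Your proposal matches the paper's intended argument: the paper explicitly defers to the Dobson--Spiga--Verret classification and supplies Lemmas \ref{lem01}, \ref{lem0}, and \ref{lem03} as exactly the three probabilistic inputs you identify, combined by a union bound. This is essentially the same approach, carried out in more detail than the paper itself provides.
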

\section{Random Cayley graphs on Type I groups}

We first consider abelian groups $A$ with $(|A|,6)=1$. Set $|A|=n$. We adopt the convention that an event $\E$ occurs in the random Cayley graph $\Gamma_p(A,S)$ \textit{with high probability (whp for short)} if $\bP(\E)\geq 1-n^{-\Omega(\log n)}$. 
\begin{thm}\label{jan} Let $\Gamma_p:=\Gamma_p(A,S)$ be the random Cayley graph with $$\frac{25(\log n)^2}{n} \leq p \leq 1- \left(\frac{10\log n}{n}\right)^{2/3},$$ where $A$ is an abelian group with order co-prime to six. Then, $\chi_D(\Gamma)\leq\chi(\Gamma)+1$ with high probability.
\end{thm}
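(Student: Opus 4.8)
The plan is to establish that with high probability the random Cayley graph $\Gamma_p(A,S)$ has automorphism group exactly $A\rtimes\langle i\rangle$ (by Theorem~\ref{smallaut}), and then to show that a "generic" proper coloring with $\chi(\Gamma)$ colors already breaks all automorphisms except possibly the translations and the inversion map $i$, which can be handled with one extra color. The key conceptual point is that once $Aut(\Gamma)=A\rtimes\langle i\rangle$, every nontrivial automorphism is of the form $x\mapsto \pm x + a$ for some $a\in A$, so we only need to defeat this very structured family of maps rather than an arbitrary automorphism group.

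First I would condition on the high-probability event from Theorem~\ref{smallaut} that $Aut(\Gamma_p)\cong A\rtimes\langle i\rangle$; this costs only an $O(\exp(-\log^2 n))$ failure probability, which is absorbed into the final $n^{-\Omega(\log n)}$ bound. I would also need control on the chromatic number and, crucially, on the \emph{structure} of optimal or near-optimal colorings: in the relevant range of $p$ one expects $\chi(\Gamma)$ to be large (comparable to $n/\log n$ up to constants, as in \cite{bgreen}), so that most color classes are small and no proper coloring can be invariant under a large group of translations. The main idea for breaking automorphisms is a counting/entropy argument: the number of proper colorings with $\chi(\Gamma)$ colors is enormous, whereas the number of colorings that are fixed (setwise on each color class) by some specific nontrivial $\pi\in A\rtimes\langle i\rangle$ is comparatively tiny, because such invariance forces the coloring to be constant on the orbits of $\pi$, which severely constrains the color classes.

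The technical heart of the argument is therefore a union bound over the $2n$ candidate automorphisms $x\mapsto\pm x+a$. For a translation $\tau_a(x)=x+a$ with $a\neq 0$, invariance of the color partition means each color class is a union of cosets of $\langle a\rangle$; since $|A|$ is coprime to $6$, $\langle a\rangle$ has size at least $5$, and I would argue that a random proper coloring (or equivalently, that the overwhelming majority of proper colorings) fails to have this coset structure. The cleanest route is to fix any single proper coloring and then recolor one carefully chosen vertex with the extra $(\chi+1)$-st color to destroy every surviving symmetry simultaneously; alternatively, one shows directly that whp \emph{no} $\chi$-coloring is $\tau_a$-invariant for any $a\ne0$ and handles $i$ and the maps $x\mapsto -x+a$ separately, using that $i$ has few fixed points (at most $m$, the number of involutions, which is small for Type~I groups since $m=1$ when $|A|$ is odd). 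The $+1$ in $\chi(\Gamma)+1$ buys the freedom to place a uniquely-colored vertex, which single-handedly kills the inversion-type automorphisms.

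\textbf{The main obstacle} I anticipate is the interplay between properness and distinguishing: I cannot choose the color classes completely freely, since they must be independent sets of $\Gamma$, and the structure of these independent sets is itself random. The delicate step is showing that, conditioned on a proper $\chi$-coloring existing, one can find such a coloring that is simultaneously distinguishing, with the failure probability summed over all $2n$ automorphisms staying below $n^{-\Omega(\log n)}$. This requires a sufficiently sharp lower bound on the number of proper $\chi$-colorings (or a robust local-recoloring argument) relative to the number of "symmetric" ones, and the upper endpoint $p\le 1-(10\log n/n)^{2/3}$ in the hypothesis is presumably exactly what is needed to guarantee enough flexibility (equivalently, enough color classes of size comparable to the independence number) for this counting to go through.
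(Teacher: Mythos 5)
Your first step (conditioning on $Aut(\Gamma_p)\cong A\rtimes\langle i\rangle$ via Theorem~\ref{smallaut}) matches the paper, but the rest of your plan has a genuine gap. Placing a single uniquely-colored vertex $v$ does kill all translations, since they act fixed-point-freely, but the stabilizer of $v$ in $A\rtimes\langle i\rangle$ is the order-two subgroup generated by the involution $x\mapsto 2v-x$, and you offer no concrete mechanism to break this last automorphism: ``it has few fixed points'' is not an argument, because an automorphism with a unique fixed point can still preserve every color class setwise, and whether it does depends on the (random, uncontrolled) structure of the remaining $\chi$ independent sets. Your alternative route --- a counting/entropy argument showing that most proper $\chi$-colorings are asymmetric --- is unsubstantiated; it would need lower bounds on the number of proper colorings of a random Cayley graph and on the structure of its independent sets that neither you nor the paper establishes, and nothing in the paper's toolkit supports it. You have also misread the role of the hypothesis $p\le 1-(10\log n/n)^{2/3}$: it has nothing to do with ``flexibility of colorings.''

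The paper's actual device is different and self-contained: it looks for a $3$-element independent set $T=\{x,y,z\}$ with $x+y+z=0$ and all entries nonzero, to be used as the single extra color class. Independence of $T$ is the event that the six differences $\pm(x-y),\pm(y-z),\pm(x-z)$ all avoid $S$, and Janson's inequality (applied to the roughly $n^2/6$ such triples, with $q=1-p$) shows such a triple exists with probability $1-e^{-\Omega(\log^2 n)}$ precisely when $q\ge(10\log n/n)^{2/3}$, which is where the upper bound on $p$ enters. The arithmetic condition $\gcd(n,6)=1$ then guarantees that \emph{no} nontrivial element of $A\rtimes\langle i\rangle$ fixes such a $T$ setwise: a translation fixing $T$ would force $2g=0$ or $3g=0$, and an inversion-type map fixing $T$ would force $2z=0$ or $3z=0$ after using $x+y+z=0$. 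Coloring $T$ with the new color and the rest with $\chi(\Gamma)$ colors is then simultaneously proper and distinguishing in one stroke, with no residual automorphism left to handle. This existence statement for a structured independent triple, proved by a second-moment-type inequality, is the missing idea in your proposal.
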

\begin{proof}  Our main probabilistic tool here is Janson's inequality. To set the notation up, we first give the setup and state Janson's inequality.

Let $R\subset\Omega$ be a random subset where each $r \in \Omega$ is chosen into $R$ independently with probability $p_r$. Let $X_i\subset\Omega$ for $i=1,2\ldots, t$, and let $B_i$ denote the event: $X_i\subset R$.  Let $$N=\#\{i : X_i\subset R\}, \hspace{0.3cm}\mu:=\bE(N),\hspace{0.3cm} \Delta:=\sum_{i \sim j} \bP(B_i \wedge B_j),$$  where $i\sim j$ if $X_i\cap X_j\neq\emptyset$. Then, $$\bP(N =0) \leq \exp\left(-\frac{\mu^2}{2\Delta}\right)$$ if $\mu \leq\Delta.$

The random process of picking $S$ is equivalent to rejecting each pair $(x,-x)$ in $A$ (for $x\neq 0$) independently with probability $q=1-p$. 

 Let $\Ta := \{\{x, y, z\}\subset A: x + y + z = 0, x \neq 0, y \neq 0, z\neq 0\}$ and for each $T \in \Ta,$ let $$D(T) := \{\pm(x-y), \pm(y-z), \pm(x-z)\}.$$ 
 First, observe that $|\Ta|=\frac{(n-5)(n-1)}{6}$. Indeed, there are $n-1$ choices for $x$ with $x\neq 0$, and since $y \notin \{0, x, -x, 2x \}, 2y \neq -x,$ there are $n-5$ choices for $y$ and $z$ is consequently determined uniquely, so that gives $(n-1)(n-5)$ ordered triples $(x,y,z)$ satisfying the conditions of the sets in $\Ta$.
 
 Consider the events $B_T$: $D(T) \subset \overline{S} ,$ and let $N = \# \{T\in\Ta: D(T) \subset \overline{S}\}$. Then 
 $$ \bE(N) =  | \Ta | q^3 = \frac{(n-5)(n-1)}{6}q^3.$$
Observe that, $T \sim U$ if and only if $| D_T \cap D_U | \neq 0$ since otherwise the choices for the sets $T,U\in\Ta$ are decided over disjoint sets of inverse-closed pairs. 
Set
\begin{align}
 \Delta &= \sum_{| D(T) \cap D(U) | \neq 0} \bP(B_T \wedge B_U)  \label{delta}
\end{align}
We shall find a suitable upper bound for $\Delta$ and in order to do that, we shall count the number of $U \in \Ta$ with $U\sim T$ for a fixed  $ T \in \Ta$.


Suppose that $|D(T)\cap D(U)|=2$.  Let $T = \{x,y,z\}$ and $U = \{u, v, w\}.$  If one of $(x-y), -(x-y) \in U$, say $x-y = u-v,$ then it follows that $\{u, v, w \} = \{u, u - (x-y), -2u + (x-y)\}$ for some $0\neq u \in A.$  In particular, for a given $T \in \Ta$ there are $3(n-1)$ choices for $U$ such that $|D(T)\cap D(U)|=2.$ One can check (by a straightforward calculation; we skip the details) that there is at most one set $U$ with  $| D(T) \cap D(U) | = 4$, and that for any $T\in\Ta$, $-T\neq T$, and $U=-T$ is the unique member of $\Ta$ satisfying $|D(T)\cap D(U)|=6$.

Therefore, we have
$$ \Delta < 3n| \Ta | q^5 + | \Ta | q^4 + | \Ta | q^3,$$ so
by Janson's inequality, it follows that 
$$ \bP(N = 0)  <  \exp\left({\frac{-| \Ta | q^3}{2(3n q^2 + q + 1)}}\right)=e^{-\Omega(\log^2 n)}$$
for $q \ge \left(\frac{17\log n}{n}\right)^{2/3}.$ 

  Suppose $\sigma \in A \rtimes \langle i \rangle$ is non-trivial and $\sigma(T) = T$ for some $T \in \Ta.$ If $\sigma = (g,1)$ for some $g \in A$, and if $\sigma(x) = y, \sigma(y) = z, \sigma(z) = x,$ say, then by the action of $(g,1)$ on $A$, it follows that $3g = 0$ contradicting that $\sigma$ is non-trivial. If  $\sigma(x) = y, \sigma(y) = x$ and $\sigma(z) = z$, say,   Then, it similarly follows that $2g = 0$, contradicting that $\sigma$ is non-trivial.  If $\sigma = (g,i)$ for some $g \in A$, and if $\sigma(x) = y, \sigma(y) = z$ and $\sigma(z) = x,$ then
since $(g,i)(x) = g - x$, it follows that  $x = y = z$ contradicting that $\{x,y,z\}\in\Ta.$ Again, if $\sigma(x) = y, \sigma(y) = x$ and $\sigma(z) = z.$ Then, it follows that  $2z - x = y$ and since $x + y + z =0,$ we have $2z = 0$, again, a contradiction to the assumption that $\{x,y,z\}\in\Ta$. The upshot is that no non-trivial $\sigma\in A \rtimes \langle i \rangle$ fixes any $T\in\Ta$.

By theorem \ref{smallaut}, the full automorphism group of this random Cayley graph is isomorphic to $A \rtimes \langle i \rangle$ \textit{whp}. From the preceding discussions, it follows that the random Cayley graph $\Gamma_p(A,S),$ contains a $3$-element independent set $\{x,y,z\}$ which is not fixed by any non-trivial automorphism $\sigma \in Aut(\Gamma)$ \textit{whp}. Color this set with a new color and the rest of the graph using as few colors as possible. This coloring is both proper and distinguishing.   
\end{proof}

\section{Random Cayley graphs on Type II groups}
The next theorem deals with the other case of abelian groups as indicated in the beginning of this section. Firstly we shall need a general lemma. To prove the lemma we use a variant of the motion lemma \cite{ns1}. As a completion we state a variant of the motion lemma.

\begin{lem}[A variant of the motion lemma]\hfill \break\label{prarg}
Let $C$ be a proper coloring of the graph $G$ with $\chi(G)$ colors and let $C_1$ be a color class in $C$. Let $\G$ be the subgroup of $Aut(G)$ consisting of all automorphisms that fix the color class $C_1$. For each $A\in\G$, let $\theta_A$ denote the total number of distinct orbits induced by the automorphism $A$ in the color class $C_1$. If for some integer $t\geq 2$, $$f(\G) = \sum_{A \in \G } t^{\theta_A-|C_1| }< r$$ where $r$ is the least prime dividing $|\G|$,
 then $\chi_D(G) \leq \chi(G) + t-1$. In particular, if $F(C_1) < |C_1| - 2 \log_t|\G|$ then this conclusion holds, where $F(C_1)$ is the maximum number of vertices a nontrivial automorphism can fix in $C_1$.
\end{lem}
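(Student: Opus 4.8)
The plan is to keep the given proper $\chi(G)$-coloring $C$ on the classes $C_2,\dots,C_{\chi(G)}$, discard the color on $C_1$, and recolor the independent set $C_1$ with $t$ brand-new colors. Any such recoloring stays proper (since $C_1$ is independent and the new colors appear nowhere else) and uses exactly $\chi(G)-1+t=\chi(G)+t-1$ colors in total, so the claimed bound will follow as soon as one good recoloring exists. Let $\G$ act on the set of all $t^{|C_1|}$ maps $c:C_1\to\{1,\dots,t\}$, and let $\G_c$ denote the stabilizer of $c$ under this action. I claim it suffices to exhibit a single $c$ with $\G_c=\{1\}$: if some $1\neq\pi\in Aut(G)$ preserved every class of the resulting coloring it would fix $C_2,\dots,C_{\chi(G)}$, hence fix their complement $C_1$ setwise, so $\pi\in\G$; and sending each new class to itself means exactly $\pi\in\G_c$, a contradiction. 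Thus the whole problem reduces to finding a coloring of $C_1$ on which $\G$ acts with trivial point-stabilizer.

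Next I would count, for a fixed $\pi\in\G$, the colorings it leaves invariant: $c$ is $\pi$-invariant iff it is constant on each of the $\theta_\pi$ orbits of $\pi$ in $C_1$, so $\pi$ fixes exactly $t^{\theta_\pi}$ of the $t^{|C_1|}$ colorings. Noting that the identity contributes the term $1$ to $f(\G)$, a crude union bound over $\G\setminus\{1\}$ shows that the number of colorings with non-trivial stabilizer is at most $t^{|C_1|}(f(\G)-1)$, which beats $t^{|C_1|}$ only when $f(\G)<2$. Getting the weaker hypothesis $f(\G)<r$ to be enough is the real content of the lemma.

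The improvement, and the step I expect to be the main obstacle, is a double count of incidences between non-identity elements of $\G$ and the colorings they fix, exploiting the least prime $r$ dividing $|\G|$. If $c$ has non-trivial stabilizer then $|\G_c|$ divides $|\G|$ and exceeds $1$, so $|\G_c|\ge r$ and $c$ is fixed by at least $r-1$ non-identity elements. Hence
$$\sum_{1\neq\pi\in\G} t^{\theta_\pi}=\sum_{c\,:\,\G_c\neq 1}\big(|\G_c|-1\big)\ge (r-1)\,\#\{c:\G_c\neq 1\}.$$
Since the left-hand side equals $t^{|C_1|}(f(\G)-1)$, this gives $\#\{c:\G_c\neq 1\}\le t^{|C_1|}(f(\G)-1)/(r-1)$, which is strictly below $t^{|C_1|}$ precisely when $f(\G)<r$. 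Under the hypothesis this holds, so some $c$ has trivial stabilizer and $\chi_D(G)\le\chi(G)+t-1$.

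For the ``in particular'' clause I would bound $\theta_A$ for each $1\neq A\in\G$ by its fixed points: if $A$ fixes $f_A$ vertices of $C_1$, the remaining $|C_1|-f_A$ lie in orbits of size at least $2$, so $\theta_A\le\frac{|C_1|+f_A}{2}$ and hence $\theta_A-|C_1|\le\frac{F(C_1)-|C_1|}{2}$. Summing and adding the identity's contribution of $1$ yields $f(\G)<1+|\G|\,t^{(F(C_1)-|C_1|)/2}$, and the assumption $F(C_1)<|C_1|-2\log_t|\G|$ forces $t^{(F(C_1)-|C_1|)/2}<|\G|^{-1}$, so that $f(\G)<2\le r$. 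Everything except the stabilizer-order step of the previous paragraph is routine bookkeeping; that step is exactly what upgrades the naive threshold $2$ to the least prime $r$, and it is where the argument earns its strength.
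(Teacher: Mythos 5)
Your proof is correct and complete. The paper itself does not prove this lemma --- it is imported from \cite{ns1} --- but your argument (recolor the independent class $C_1$ with $t$ fresh colors, reduce to finding a coloring of $C_1$ with trivial stabilizer in $\G$, and double-count fixed colorings using that any nontrivial stabilizer has order at least the least prime $r$ dividing $|\G|$) is precisely the standard Russell--Sundaram-style motion-lemma argument that the cited result rests on, including the correct handling of the ``in particular'' clause via $\theta_A\le(|C_1|+F(C_1))/2$.
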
 

\begin{lem}\label{z2n}
Let $A\simeq\mathbb{Z}^r_2 \times N,$ where $N$ is a non-cyclic group of  odd order and let $\Gamma=\Gamma(A,S)$ be a Cayley graph on $A$. Suppose that $Aut(\Gamma) \cong  A \rtimes \langle i \rangle.$  If  $m$ is the number of elements in $A$ of order at most $2,$ and $\chi(\Gamma) < \frac{n}{m+2 \log (2n)},$ then $\chi_D(\Gamma)\leq \chi(\Gamma) + 1.$
\end{lem}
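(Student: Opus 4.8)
The plan is to apply the variant of the motion lemma (Lemma \ref{prarg}) directly, so the whole argument reduces to controlling how many vertices of a single color class a non-trivial automorphism can fix. Since we are given $Aut(\Gamma)\cong A\rtimes\langle i\rangle$, the group $\G$ of automorphisms fixing a chosen color class $C_1$ is a subgroup of $A\rtimes\langle i\rangle$, so in particular $|\G|\le 2n$. The strategy is to use the second, cleaner sufficient condition in Lemma \ref{prarg} with $t=2$: it suffices to exhibit a color class $C_1$ in an optimal proper coloring such that $F(C_1) < |C_1| - 2\log_2|\G|$, where $F(C_1)$ is the maximum number of vertices that any single non-trivial automorphism can fix inside $C_1$. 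Using $|\G|\le 2n$, it is enough to ensure $F(C_1) < |C_1| - 2\log_2(2n)$.

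First I would bound $F(C_1)$ by analyzing the fixed-point sets of non-trivial elements of $A\rtimes\langle i\rangle$. An element of the form $(g,1)$ with $g\ne 0$ acts as a translation $x\mapsto x+g$, which is fixed-point-free, so such automorphisms fix no vertex at all. The only automorphisms with fixed points are the reflections $(g,i)\colon x\mapsto g-x$. A vertex $x$ is fixed by $(g,i)$ precisely when $2x=g$, and the number of solutions to $2x=g$ in $A\simeq \mathbb{Z}_2^r\times N$ is exactly the number of elements of order dividing $2$, namely $m$ (when $g$ is in the image of the doubling map, and zero otherwise). Hence \emph{any} non-trivial automorphism fixes at most $m$ vertices of the whole graph, so certainly $F(C_1)\le m$ for every color class. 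This is precisely why the hypothesis is phrased in terms of $m$.

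Next I would produce a color class that is large enough. In any proper coloring of $\Gamma$ with $\chi(\Gamma)$ colors, the $n$ vertices are partitioned into $\chi(\Gamma)$ independent sets, so by averaging at least one color class $C_1$ has size $|C_1|\ge n/\chi(\Gamma)$. Combining this with the bound $F(C_1)\le m$ from the previous step, it suffices to verify the inequality
\begin{equation*}
m < \frac{n}{\chi(\Gamma)} - 2\log_2(2n),
\end{equation*}
which rearranges exactly to the hypothesis $\chi(\Gamma) < \frac{n}{m+2\log(2n)}$. Thus the chosen class $C_1$ satisfies $F(C_1)<|C_1|-2\log_2|\G|$, and Lemma \ref{prarg} with $t=2$ yields $\chi_D(\Gamma)\le\chi(\Gamma)+(t-1)=\chi(\Gamma)+1$.

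The main obstacle, such as it is, lies in the fixed-point count rather than in the averaging: one must verify carefully that the only automorphisms with any fixed points are the reflections $(g,i)$, and that each fixes at most $m$ vertices. This uses that $A$ is abelian (so $i\colon x\mapsto -x$ is genuinely an automorphism and the semidirect product has the stated form) and the explicit description of $A\simeq\mathbb{Z}_2^r\times N$ to identify the solution set of $2x=g$ with the $m$ elements of order at most two. I would take care to confirm that for a reflection the fixed set is either empty or a coset of the subgroup of involutions-plus-identity, so its cardinality is at most $m$ uniformly over $g$, which is what makes the clean bound $F(C_1)\le m$ hold for every color class simultaneously.
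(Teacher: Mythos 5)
Your proposal is correct and follows essentially the same route as the paper: both arguments take a largest color class $C_1$ with $|C_1|\ge n/\chi(\Gamma)$, observe that only the reflections $(g,i)$ have fixed points and that each fixes at most $m$ vertices, and then invoke Lemma \ref{prarg} with $t=2$. The only cosmetic difference is that you apply the ``in particular'' clause with $F(C_1)\le m$ directly, while the paper bounds $\theta_\sigma$ and $f(\G)$ explicitly before specializing to $t=2$; the resulting inequality is the same.
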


\begin{proof} Let us denote $\chi(\Gamma)=\chi$ and let $C_1$ be a maximum sized color class in a proper coloring of $\Gamma$ using $\chi$ colors, so that $| C_1| \geq n/\chi.$  


Observe that a non-trivial automorphism which fixes any vertex of $\Gamma $ is necessarily of the form $(g, i)$ for some $g \in A.$ Moreover, $(g,i)$ fixes a vertex $h \in \Gamma$ if
and only if $g =2h$ in $A.$ It follows that any non-trivial automorphism $\sigma$ fixes at most $m$ vertices in $\Gamma.$ Therefore in Lemma \ref{prarg}, $\theta_{\sigma} \le m + (|C_1| - m )/2$. Therefore $f(\G) \le 2nt^{-\alpha}$ where $\alpha:=\frac{n/\chi - m}{2}.$ Now observe that $$t:=\lceil(2n)^{\frac{2\chi}{n-m\chi}}\rceil \Longrightarrow 2n<t^{\alpha}.$$
Hence there exists a proper $\chi + t-1$ coloring of $\Gamma$ that is also distinguishing. In particular,  if $\chi < \frac{n}{m+2\log (2n)}$ we may take $t=2$, and this proves the lemma. \end{proof}

Finally we have the corresponding theorem for random Cayley graph $\Gamma_p(A,S)$ for $A\simeq\mathbb{Z}^r_2 \times N$ with $N$ being a non-cyclic group of odd order.
\begin{thm}\label{finthm} Suppose $A$ is a Type II abelian group of order $n$ and suppose  that  $m \ll \frac{n}{\log^2 n}$. Let $\Gamma_p:=\Gamma_p(A,S)$ be the random Cayley graph, with $\frac{25(\log n)^2}{n} \leq p \leq \frac{7}{13(m+2\log 2n)}$ .  Then \textit{whp} $$\chi_D(\Gamma_p)\le \chi(\Gamma_p) + 1.$$ 
\end{thm}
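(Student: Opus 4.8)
The plan is to deduce the theorem by verifying, whp, the two hypotheses of the deterministic criterion in Lemma \ref{z2n}, namely that (i) $Aut(\Gamma_p)\cong A\rtimes\langle i\rangle$ and (ii) $\chi(\Gamma_p)<\frac{n}{m+2\log(2n)}$, and then simply quoting that lemma. The whole difficulty has been front-loaded into the choice of the upper cutoff $p\le\frac{7}{13(m+2\log 2n)}$, which (as I explain below) is reverse-engineered precisely so that the crude greedy bound on $\chi$ already beats the threshold demanded by Lemma \ref{z2n}; the standing assumption $m\ll n/\log^2 n$ is what guarantees that the admissible interval for $p$ is nonempty and that $\frac{n}{m+2\log 2n}$ is large.

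For hypothesis (i), I would observe that $\frac{25(\log n)^2}{n}\le p\le\frac{7}{13(m+2\log 2n)}\le\frac12<1-\frac{25(\log n)^2}{n}$ for large $n$, so Theorem \ref{smallaut} applies directly and gives $\bP(Aut(\Gamma_p)\not\cong A\rtimes\langle i\rangle)\le O(\exp(-\log^2 n))=n^{-\Omega(\log n)}$. For hypothesis (ii), the key point is that $\Gamma_p$ is a Cayley graph, hence regular of degree $|S|$, so deterministically $\chi(\Gamma_p)\le |S|+1$ by greedy coloring. Thus it only remains to control $|S|$. Writing $|S|$ as a sum of independent contributions over the inverse-closed pairs $(x,-x)$, each contributing $1$ (if $o(x)\le 2$) or $2$ (otherwise) when selected, one computes $\bE|S|=p(n-1)$, and a Chernoff bound yields $\bP\big(|S|>(1+\delta)p(n-1)\big)\le \exp(-c\delta^2 pn)$. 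Since $pn\ge 25(\log n)^2$, this failure probability is $n^{-\Omega(\log n)}$ for any fixed $\delta$.

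It then suffices to fix $\delta$ (for instance $\delta=\tfrac12$) so that $(1+\delta)\tfrac{7}{13}<1$: on the event $|S|\le(1+\delta)p(n-1)$ we get
$$\chi(\Gamma_p)\le (1+\delta)p(n-1)+1\le (1+\delta)\tfrac{7}{13}\cdot\frac{n-1}{m+2\log 2n}+1,$$
and because $m\ll n/\log^2 n$ forces $\frac{n}{m+2\log 2n}\to\infty$, the additive $+1$ is absorbed and the right-hand side is strictly below $\frac{n}{m+2\log 2n}$. Finally, intersecting the two events above (their union of complements has probability $\le n^{-\Omega(\log n)}$), both hypotheses of Lemma \ref{z2n} hold simultaneously, so $\chi_D(\Gamma_p)\le\chi(\Gamma_p)+1$ whp. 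The only genuinely probabilistic step is the concentration of $|S|$, and this is the step I would expect to be the ``main obstacle'' only in the bookkeeping sense: it is routine given the strong lower bound $p\ge 25(\log n)^2/n$, which is exactly what upgrades ordinary concentration into a bound of the required order $1-n^{-\Omega(\log n)}$.
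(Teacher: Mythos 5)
Your proposal is correct and follows essentially the same route as the paper: verify $Aut(\Gamma_p)\cong A\rtimes\langle i\rangle$ via Theorem \ref{smallaut}, bound $\chi(\Gamma_p)\le|S|+1$ by regularity, show $|S|$ is concentrated below $\frac{7}{13}\cdot\frac{n}{m+2\log 2n}$ (with margin) by a Chernoff bound, and invoke Lemma \ref{z2n}. The only cosmetic difference is that the paper splits $|S|=X'+2X''$ into two binomial variables and bounds each deviation separately, whereas you treat $|S|$ as one sum of independent bounded contributions with a fixed multiplicative slack $\delta=1/2$; both yield the same $n^{-\Omega(\log n)}$ failure probability.
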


\begin{proof}
Let $$X' := \sum\limits_{\substack{x:2x=0 \\ x \neq 0}} \mathbf{1}_{x\in S} \hspace{1cm}  X'':= \sum\limits_{\substack{(x, -x )\\{x \neq
-x}}} \mathbf{1}_{x,-x\in S}$$ so $|S|=X'+2X''$. Then $X', X''$ are  binomial random variables with parameters $(m-1, p)$ and $(\frac{n-m}{2},p)$ respectively.  Then
$$
 \bE(|S|) =(n-1)p < np.
$$
By the concentration of binomial random variables (see theorem 2.1 in \cite{janson}) we have 
\begin{align}
 \bP(|S| \geq \bE(|S|) + 3t)  &\leq \bP(X' \geq \bE(X') + t) + \bP(X'' \geq \bE(X'') + t) \nonumber \\
 &\leq\exp \Big(-\frac{t^2}{2((m-1)p + \frac{t}{3})}\Big) + \exp\Big(-\frac{t^2}{2(\frac{n-m}{2}p + \frac{t}{3})}\Big)\label{chernoff3}\end{align}

Set $t = \frac{2n}{13(m+2 \log2n)}.$ Since $m\ll \frac{n}{\log^2 n}$ it follows that for $$\frac{25\log^2 n}{n}\leq p <\frac{7}{13(m+2\log 2n)} <1-\frac{25\log^2 n}{n}$$ the right hand side of (\ref{chernoff3}) is at most $e^{-\Omega(\log^2 n)}$, so that \textit{whp} $|S|\le \frac{13np}{7}<\frac{n}{m+2 \log (2n)}$. Hence by theorem \ref{smallaut} and lemma \ref{z2n}, and the fact that $\chi(G)\le \Delta(G)+1$ for any graph $G$, it follows that $\chi_D(\Gamma_p)\le\chi(\Gamma_p)+1$ \textit{whp}. \end{proof}

\section{Concluding Remarks}
\begin{enumerate}
\item As emphasized in the introduction, all our results regarding random Cayley graphs hold with probability $1-n^{-\Omega(\log n)}$. However, if we wish to only prove that  results asymptotically almost surely, i.e., with probability $1-o(1)$, then improvements on some of the results is not difficult. For instance, Alon proved in \cite{Al} that if we pick $k\le n/2$ subsets uniformly at random and then complete them to inverse-closed sets, then \textit{a.a.s} $\chi(\Gamma(A,S))\le O\left(\frac{k}{\log k}\right)$. So for $A\simeq\mathbb{Z}^r_2 \times N$ with $N$ a non-cyclic group of odd order with $n^{3/4}\log n\ll m\ll  \frac{n}{\log n}$, one can prove by minor modifications, that \textit{a.a.s} $\chi_D(\Gamma_p)\le\chi(\Gamma_p)+1$ if $\frac{c\log^2 n}{n}\le p\le \frac{C\log n}{m+2\log 2n}$ for suitable constants $c,C$. We skip the details.
\item It is possible to extend some of the methods in the study of $\chi_D(\Gamma_p(A,S))$  to other abelian groups as well.  For non abelian groups $A$, it is a yet-unsettled conjecture of  Babai, Godsil, Imrich, and Lov\'asz (see \cite{BG} for details and a proof of the conjecture for nilpotent non-abelian groups), that for any group which is not generalized dihedral, almost surely $Aut(\Gamma_{1/2}(A,S))\simeq A$ as $|A|\to\infty$. Thus, for all such graphs it is clear that $\chi_D(G)\le \chi(G)+1$ since one can pick an arbitrary non-identity vertex and color it using a distinct color, and color the rest of the graph using at most $\chi(G)$ colors. Since $A$ acts regularly, it follows that this coloring is distinguishing as well.  We in fact believe that something stronger is true, viz., that for almost all Cayley graphs, $\chi_D(G)=\chi(G)$. At the moment, we are only able to show the same in certain non-abelian $q$-groups, for $q$ a large enough prime. Indeed, by the result of \footnote{It requires a very small tweak but the proof runs through without any major changes} \cite{BG}, for $p=1/2$, a random Cayley graph $\Gamma=\Gamma_{1/2}(A,S)$ almost surely has full automorphism group isomorphic to $A$, when $A$ is a  nilpotent non-abelian group. Furthermore, from a result\footnote{Again, the proof in \cite {Al} can be followed as it is in our random Cayley graph model to get the same result.} of \cite{Al}, we have $\chi(\Gamma)=\Omega(\frac{n}{\log^2 n})$. Suppose $|A|=q^r$ for a fixed $r$, and $q$ a  sufficiently large prime. If $\phi=\phi_g$ for $g\in A$ is an automorphism that fixes every color class of this coloring, then note that each color class has at least $q$ elements, so that $\chi(\Gamma)\le q^{r-1}$. But this contradicts the result of \cite{Al} since $q\gg \Omega_{r}(\log^2 q)$. The same arguments work over a slightly larger range for $p=\Omega(1)$ along the same lines as discussed above.
\end{enumerate}

 \end{document}